\newtheorem{theorem}{Theorem}[section]
\theoremstyle{definition}
\theoremstyle{remark}
\numberwithin{equation}{section}
\begin{document}

\title{On a new class of additive  (splitting) 
operator-difference schemes}

\author{Petr N. Vabishchevich}
\address{Keldysh Institute of Applied Mathematics,
Russian Academy of Sciences, 4 Miusskaya Sq., 
125047 Moscow, Russia}
\email{vabishchevich@gmail.com}

\subjclass[2000]{Primary 65N06, 65M06}

%

\keywords{Evolutionary problems, splitting scheme, 
the stability of operator-difference scheme,
vector additive scheme}

\begin{abstract}
Many applied time-dependent problems are characterized 
by an additive representation of the problem operator.
Additive schemes are constructed using such a splitting 
and associated with the transition to a new time level 
on the basis of the solution of more simple problems for 
the individual operators in the additive
decomposition.
We consider a new class of additive schemes for problems 
with additive representation of the operator at the time 
derivative.  
In this paper we construct and study the vector 
operator-difference schemes, which are characterized 
by a transition 
from one initial the evolution equation to a system 
of such equations.  
\end{abstract}

\maketitle

\section*{Introduction}

For the approximate solution of multidimensional unsteady 
problems of mathematical physics there are widely used 
different classes of additive schemes (splitting schemes)  
\cite{Yanenko,Marchuk,Samarskii}. 
Beginning with the pioneering works \cite{Peaceman, Douglas} 
the most simple 
way to construct additive schemes is in the splitting of the 
problem operator on the sum of two operators with a more 
simple structure --- alternating direction methods, factorized 
schemes, predictor-corrector schemes etc.  
\cite{SamVabAdditive}.

In the more general case of multicomponent splitting, classes 
of unconditionally stable operator-difference schemes are based 
on the concept of summarized approximation.  
In this way, we can construct the classic locally one-dimensional 
schemes (componentwise splitting schemes)
\cite{Marchuk,Samarskii}, additively-averaged locally one-dimensional 
schemes 
\cite{GordMel,SamVabAdditive}.   

A new class of unconditionally stable schemes --- vector 
additive schemes (multicomponent alternating direction method 
schemes) is actively developed 
(see, eg, \cite{Abrashin, VabVectAdd}).  
They belong to a class of full approximation schemes  --- each 
intermediate problem approximates the original one.  
The most simple additive  full approximation schemes are 
based on the principle of regularization of operator-difference 
schemes.
Improving the quality of operator-difference schemes is achieved 
using additive or multiplicative perturbations of operators of 
the scheme  \cite{SamReg}.
Regularized additive schemes for evolutionary equations of the 
first and second order are constructed for equations as well as 
systems of equations   \cite{SamVabReg,VabReg}.
Both the standard schemes of splitting with respect to separate 
directions (locally-onedimensional schemes), splitting with 
respect to physical processes 
and regionally-additive schemes based on domain decomposition 
for constructing parallel algorithms for transient problems of 
mathematical physics  
\cite{VabDDM,Mathew,SamMatVab}.

At present, different classes of additive operator-difference 
schemes for evolutionary equations are constructed via additive 
splitting of the main operator (connected with the solution) 
onto several terms. 
For a number of applications it is interesting to consider 
problems in which the additive representation demonstrates an 
operator at the time derivative.  
In this work, for this new class of evolutionary problems the 
vector additive operator-difference schemes are constructed 
and studied.
The work is organized as follows.  
Section 1 provides a statement of the problem along with 
a simple a priori estimate of the stability for the solutions 
with respect to 
initial data and right-hand side. This estimate is nothing but 
our reference point when considering the vector problem and 
the operator-difference schemes. 
The vector differential problem is considered in Section 2.   
The central part of the work (Section 3) deals with 
the construction and investigation of the stability of 
vector additive schemes.  
Possible generalizations of the results are discussed in Section 4.

\section{Statement of the problem}

Let $H$ be a finite-dimensional Hilbert space, and $A,B,D$  
be linear operators in $H$. 
We consider grid functions $y$  of finite-dimensional real 
Hilbert space 
$H$, for the scalar product and norm in which we use the notations: 
$ (\cdot,\cdot),~~ \|y\| = (y,y)^{1/2}$.
For $\ D = D^* > 0$ we introduce space $H_D$ with scalar 
product $(y,w)_D = (Dy,w)$ and norm $\|y\|_D= (Dy,y)^{1/2}$.

In the Cauchy problem for evolutionary equation of first 
order we search function $y (t) \in H$, which satisfies 
the equation 
\begin{equation}\label{1}
   B \frac {d u} {d t}
   + A u = f(t),
   \quad t > 0
\end{equation}
and the initial condition
\begin{equation}\label{2}
   u(0) = u^0
\end{equation}
at given  $f(t) \in  H$.

We assume that linear operators $A$ and $B$, acting from $H$ 
into $H$ 
($ A: H \to H $, $ B: H \to H$),  are positive, self-adjoint 
and stationary, that is  
\[
  A = A^* > 0, \ \frac{d}{d t} A = A \frac{d}{d t},
  \quad B = B^* > 0, \ \frac{d}{d t} B = B \frac{d}{d t} .
\]
For problem  (\ref{1}), (\ref{2}) we can obtain different 
a priori estimates, which express the stability of 
the solution with respect to 
the initial data and right hand side in different spaces.  
We restrict ourselves to the simplest of them, trying to get 
the same type of estimates for both the scalar and vector problems
as well as for  the solution of both differential and 
difference problems.  

Multiplying scalarly both sides of equation (\ref{1}) 
in $H$ by $u$, we get  
\[
  \frac{1}{2} \frac{d }{d t} (B u, u) +
  (A u, u) = (f, u ) .
\]
For the right hand side we use the estimate 
 \[
  (f, u) \leq 
  (A u, u)  +
  \frac{1}{4} \left (A^{-1} f, f \right ) .
\]
This yields the following a priori estimate for the 
solution of problem (\ref{1}), (\ref{2}):
\begin{equation}\label{3}
   \|u(t)\|^2_B \le \|u^0\|^2_B + 
   \frac{1}{2} \int\limits_0^t \|f(s)\|^2_{A^{-1}} ds,
\end{equation}
which expresses the stability of the solution with respect 
to the initial data and right hand side.  

Standard additive difference schemes are characterized 
by decomposition (splitting) of the operator $A$ onto 
the sum of operators 
of a simpler structure.  
For example, we assume that for operator $A$ we have 
the following additive representation: 
\begin{equation}\label{4}
  A = \sum_{\alpha = 1} ^{p} A_\alpha ,
  \quad  A_\alpha = A_\alpha^* \ge 0,
  \quad \alpha =1,2,...,p .
\end{equation}
Additive difference schemes are based on the basis 
of (\ref{4}), where the problem is decomposed into 
$p$ subproblems.  
The transition from time level $ t^n $  to the next 
level $ t^{n+1} = t^n + \tau$, where 
$\tau > 0 $ is the time step and 
$y^n = y(t^n),~ t^n = n\tau,~ n = 0, 1, ...$, 
is associated with solving problems for individual 
operators $A_\alpha, \alpha = 1,2,...,p$ in additive 
decomposition (\ref{4}).  

The subject of our consideration will be another case.  
In a number of problems the computational complexity 
is not associated with operator $A$, but with 
operator $B$ at the derivatives in time.  
In this case, to decrease the computational complexity 
of problem (\ref{1}), (\ref{2})
we employ the additive representation  
\begin{equation}\label{5}
  B = \sum_{\alpha = 1} ^{p} B_\alpha ,
  \quad  B_\alpha = B^*_\alpha > 0,
  \quad \alpha =1,2,...,p .
\end{equation}
instead of (\ref{4}). 
The transition to a new time level is connected with 
the solution of some auxiliary Cauchy problems for equations  
\[
   B_{\alpha} \frac {d u_{\alpha}} {d t}
   + A u_{\alpha} = f_{\alpha}(t),
   \quad t > 0
   \quad \alpha =1,2,...,p 
\]
with specified appropriate initial conditions. 

\section{Vector problem}

By definition, put ${\bf u}  =  \{u_1, u_2, ..., u_p \}$.
Each individual component is defined as the solution of 
similar problems  
\begin{equation}\label{6}
   \sum_{\beta  = 1} ^{p} B_\beta  \frac {d u_{\beta }} {d t}
   + A u_{\alpha} = f(t),
   \quad t > 0 ,
\end{equation}
\begin{equation}\label{7}
   u_{\alpha}(0) = u^0, 
  \quad \alpha =1,2,...,p .
\end{equation}

Here is the simplest coordinate-wise estimate for the 
stability of the solution.  
Subtracting one equation from another, we get  
\[
   A (u_{\alpha} - u_{\alpha-1} ) = 0,
  \quad \alpha =2,3,...,p .
\]
Taking into account the positivity of operator $A$ this gives  
\[
  u_{\alpha} = u_{\alpha-1} ,
  \quad \alpha =2,3,...,p .
\]
For separate component $u_{\alpha}$ we obtain 
the same equation as for $u$:  
\[
   \sum_{\beta  = 1} ^{p} B_\beta  \frac {d u_{\alpha}} {d t}
   + A u_{\alpha} = f(t),
   \quad t > 0 ,
  \quad \alpha =1,2,...,p .
\]
For the same reason, there are a priori estimates  
\begin{equation}\label{8}
   \|u_{\alpha}(t)\|^2_B \le \|u^0\|^2_B + 
   \frac{1}{2} \int\limits_0^t \|f(s)\|^2_{A^{-1}} ds,
  \quad \alpha =1,2,...,p .
\end{equation}
It follows that  
\[
  u_{\alpha}(t) = u(t) ,
   \quad t > 0 ,
  \quad \alpha =1,2,...,p .
\]
Therefore, as the solution of original problem  
(\ref{1}), (\ref{2})  we can take any component 
of the vector  ${\bf u}(t)$.

For the vector evolutionary problem  we can obtain 
a priori estimates for vector ${\bf u}$, 
considering the problem in Hilbert space 
$\mathbf{H} = H^p$ with the scalar product   
\[
  (\mathbf{u}, \mathbf{v}) =
  \sum_{\alpha=1} ^{p} (u_\alpha, v_\alpha) .
\]
This technique is used, for example, in \cite{SamVabAdditive}  
when considering additive schemes with  splitting (\ref{4}).

We rewrite equations  (\ref{6}) in the form  
\[
   B_\alpha A^{-1} \sum_{\beta  = 1} ^{p} B_\beta  \frac {d u_{\beta }} {d t}
   + B_\alpha u_{\alpha} = \tilde{f}_\alpha(t),
   \quad t > 0 ,
  \quad \alpha =1,2,...,p ,
\]
where  $\tilde{f}_\alpha = B_\alpha A^{-1} f$.
This allows us to write the system of equations in vector form  
\begin{equation}\label{9}
  \mathbf{C} \frac{d \mathbf{u}}{d t} +
  \mathbf{D} \mathbf{u} = \tilde{\mathbf{f}} .
\end{equation}
Operator matrix $\mathbf{C}$ and $\mathbf{D}$ have the form  
\begin{equation}\label{10}
  \mathbf{C} = \{ C_{\alpha \beta } \},
  \quad C_{\alpha \beta } = B_\alpha A^{-1} B_\beta,
\end{equation}
\[
  \mathbf{D} = \{ D_{\alpha \beta } \},
  \quad D_{\alpha \beta } = B_\alpha \delta_{\alpha \beta },
  \quad \alpha, \beta  =1,2,...,p ,
\]
where  $\delta_{\alpha \beta }$ is the Kronecker delta.
Equation (\ref{9}) is supplemented by the initial condition  
\begin{equation}\label{11}
  \mathbf{u}(0) = \mathbf{u}^0 .
\end{equation}

The principal advantage of notation  (\ref{9}) results 
from the fact that 
\[
  \mathbf{C} = \mathbf{C}^* \geq 0,
  \quad \mathbf{D} = \mathbf{D}^* > 0 
\]
in  $\mathbf{H}$. 

Here is a priori estimate for the solution of 
vector problem (\ref{9})--(\ref{11}).
This estimate, on the one hand, is more complicated 
than (\ref{8}) and, on the other hand,
we will use it as the guideline in the consideration 
of the operator-difference schemes.  

Multiplying both sides of (\ref{9}) scalarly  
in $\mathbf{H}$  by  $d \mathbf{u}/ d t$, we get
\begin{equation}\label{12}
  \left (\mathbf{C} 
  \frac{d \mathbf{u}}{d t}, \frac{d \mathbf{u}}{d t} \right ) 
  + \frac{1}{2} \frac{d}{d t} (\mathbf{D} \mathbf{u}, \mathbf{u}) 
  =  \left (\tilde{\mathbf{f}}, \frac{d \mathbf{u}}{d t} \right ) .
\end{equation}  
Taking into account   (\ref{10}), we obtain  
\[
  \left (\mathbf{C} 
  \frac{d \mathbf{u}}{d t}, \frac{d \mathbf{u}}{d t} \right ) 
  = \left (  A^{-1} \sum_{\beta=1}^{p} B_\beta u_{\beta},
  \sum_{\beta=1}^{p} B_\beta u_{\beta} \right ),
\]
and for the right hand side of (\ref{12}) we have 
\begin{equation}\label{13}
  \left (\tilde{\mathbf{f}}, \frac{d \mathbf{u}}{d t} \right ) 
  = \left (  A^{-1} f, \sum_{\beta=1}^{p} B_\beta u_{\beta} \right ) \leq 
  \left (\mathbf{C} 
  \frac{d \mathbf{u}}{d t}, \frac{d \mathbf{u}}{d t} \right ) 
  + \frac{1}{4} \left ( A^{-1} f, f \right ) .
\end{equation}

Similarly (\ref{3}), (\ref{8}),  from  (\ref{12}), (\ref{13})  
it follows the estimate  
\begin{equation}\label{14}
  \| \mathbf{u} \|^2_{\mathbf{D}} \leq 
  \| \mathbf{u}^0 \|^2_{\mathbf{D}} + 
   \frac{1}{2} \int\limits_0^t \|f(s)\|^2_{A^{-1}} ds .
\end{equation}
Taking into account (\ref{10}), we have  
\[
  \| \mathbf{u} \|^2_{\mathbf{D}} = 
  \sum_{\alpha =1}^{p}\left (  B_\alpha u_{\alpha}, u_{\alpha} \right ) .
\]
Thus, estimate  (\ref{14}) can be considered along 
with  (\ref{8}) as the vector analogue of estimate 
 (\ref{3}).  
Taking into account  (\ref{5}), estimate (\ref{12}) 
gives the stability of any individual component of 
vector $\mathbf{u}(t)$.

\section{Additive vector schemes}

Splitting schemes for the approximate solution of 
(\ref{1}), (\ref{2}),  (\ref{5}) will be constructed 
on the basis of usual  
schemes with weights for vector problem  (\ref{6}), (\ref{7}).

The standard two-level scheme with weights for problem 
(\ref{1}), (\ref{2})  has the form 
\begin{equation}\label{15}
  B \frac{y^{n+1} - y^{n}}{\tau } +
  A (\sigma y^{n+1} + (1-\sigma)y^{n} ) = \varphi^n,
  \quad n = 0, 1, ... ,
\end{equation}
where, for example, 
 \[
  \varphi^n = f(\sigma t^{n+1} + (1-\sigma)t^{n} ) ,
\]
and $\sigma$ is a weight parameter (usually  
$0 \leq \sigma \leq 1$).

In  the general theory of operator-difference schemes 
stability developed by A.A. Samarskii  
\cite{Samarskii,SamGul,SamMatVab}, 
there were obtained the exact (unimproved) stability 
criteria for two-level and three-level operator-difference 
schemes in various norms.  
They can be directly used in the study of schemes 
with weights (\ref{15}).  
Here is a typical result.  

\begin{theorem}\label{t-1}
If $\sigma \geq 1/2$, then operator-difference scheme  
(\ref{15}) is absolutely stable in $H_B$ and for 
the difference solution 
the level-wise estimate is valid
\begin{equation}\label{16}
  \|y^{n+1}\|^2_B \leq  \|y^{n}\|^2_B +
  \frac{\tau }{2} \|\varphi^n\|^2_{A^{-1}} .
\end{equation}
\end{theorem}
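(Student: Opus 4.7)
The plan is to mimic the continuous energy argument used to derive (\ref{3}), taking the inner product of (\ref{15}) with the weighted state $y^\sigma := \sigma y^{n+1} + (1-\sigma) y^n$ rather than with the difference quotient. The key algebraic identity I will need is
\[
  y^\sigma = \tfrac{1}{2}(y^{n+1}+y^n) + \bigl(\sigma-\tfrac{1}{2}\bigr)\tau\, y_t,
  \qquad y_t := \frac{y^{n+1}-y^n}{\tau},
\]
which splits $y^\sigma$ into a symmetric Crank--Nicolson part plus a regularizing perturbation along $y_t$.

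Using this identity and the self-adjointness of $B$, the first term evaluates as
\[
  (B y_t, y^\sigma) = \frac{1}{2\tau}\bigl(\|y^{n+1}\|_B^2 - \|y^n\|_B^2\bigr) + \bigl(\sigma-\tfrac{1}{2}\bigr)\tau\, \|y_t\|_B^2,
\]
since $(B(y^{n+1}-y^n),y^{n+1}+y^n) = \|y^{n+1}\|_B^2 - \|y^n\|_B^2$. Taking the scalar product of (\ref{15}) with $y^\sigma$ in $H$ therefore yields the discrete energy identity
\[
  \frac{1}{2\tau}\bigl(\|y^{n+1}\|_B^2 - \|y^n\|_B^2\bigr) + \bigl(\sigma-\tfrac{1}{2}\bigr)\tau\, \|y_t\|_B^2 + \|y^\sigma\|_A^2 = (\varphi^n, y^\sigma).
\]

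For the right-hand side I will use Cauchy--Schwarz with the weight $A=A^*>0$ in exactly the same form employed for the differential problem, namely
\[
  (\varphi^n, y^\sigma) \le \|y^\sigma\|_A^2 + \tfrac{1}{4}\|\varphi^n\|_{A^{-1}}^2,
\]
so that the $\|y^\sigma\|_A^2$ term is absorbed. The hypothesis $\sigma\ge 1/2$ enters here, and only here: it makes the coefficient $(\sigma-\tfrac{1}{2})\tau \|y_t\|_B^2$ nonnegative so that it can simply be dropped. Multiplying the resulting inequality by $2\tau$ gives precisely (\ref{16}).

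There is no real obstacle; the only delicate point is keeping the decomposition of $y^\sigma$ and the signs straight so that the term carrying $\sigma-\tfrac{1}{2}$ has the right weight $\|y_t\|_B^2$ (not $\|y_t\|_A^2$), which is what allows the scheme to be stable in $H_B$ rather than only in $H_A$. Absolute stability (independence of $\tau$) is then immediate from the level-wise inequality, and summation over $n$ yields the standard a priori bound analogous to (\ref{3}).
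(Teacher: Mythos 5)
Your proof is correct and is essentially identical to the paper's: the same test function $y^{\sigma(n)}=\tfrac12(y^{n+1}+y^n)+(\sigma-\tfrac12)\tau\,y_t$, the same resulting energy identity with the $(\sigma-\tfrac12)\tau\|y_t\|_B^2$ term, and the same Young-type bound $(\varphi^n,y^{\sigma})\le\|y^{\sigma}\|_A^2+\tfrac14\|\varphi^n\|_{A^{-1}}^2$ to absorb the $A$-term. No gaps.
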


\begin{proof}
By definition, put 
\[
  y^{\sigma(n)} =
  \sigma y^{n+1} + (1-\sigma)y^{n} =
  \frac{1}{2} ( y^{n+1} +  y^{n}) +
  \tau \left (\sigma - \frac{1}{2} \right )
  \frac{y^{n+1} - y^{n}}{\tau} .
\]
Multiplying scalarly in  $H$ both sides of (\ref{15})  
by  $y^{\sigma(n)}$, we get   
\[
  \frac{1}{2 \tau } (B(y^{n+1} - y^{n}), y^{n+1} + y^{n} ) +
\]
\[
  \tau \left (\sigma - \frac{1}{2} \right )
  \left ( B \frac{y^{n+1} - y^{n}}{\tau }, \frac{y^{n+1} - y^{n}}{\tau } \right )
  + (A y^{\sigma(n)}, y^{\sigma(n)}) 
  = (\varphi^n, y^{\sigma(n)}) .
\]
For the right hand side we use the estimate  
\[
  (\varphi^n, y^{\sigma(n)}) \leq 
  (A y^{\sigma(n)}, y^{\sigma(n)}) +
  \frac{1}{4} (A^{-1} \varphi^n, \varphi^n) .
\]
If $\sigma \geq 1/2$, we obtain desired  estimate 
(\ref{16}) for the stability of the numerical solution 
with respect to
the initial data and right hand side, which is the 
grid analog of  estimate(\ref{3}) for the solution of 
problem  (\ref{1}), (\ref{2}).
This concludes the proof.  
\end{proof}

To solve vector problem (\ref{6}), (\ref{7}) we apply 
the following difference scheme:  
\begin{equation}\label{17}
  B_\alpha \left ( \theta \frac{y_\alpha^{n+1} - y_\alpha^{n}}{\tau} +
  (1-\theta) \frac{y_\alpha^{n} - y_\alpha^{n-1}}{\tau} \right ) +
\end{equation}
\[  
  \sum_{\alpha \neq \beta= 1}^{p} 
  B_\beta \frac{y_\beta^{n} - y_\beta^{n-1}}{\tau } +
  A (\sigma y_\alpha^{n+1} + (1-2\sigma)y_\alpha^{n} + \sigma y_\alpha^{n-1}) 
  = \varphi^n,
\]
\[
  \quad n = 0, 1, ... , 
  \quad \alpha =1,2,...,p .
\]
Unlike (\ref{15})) scheme(\ref{17}) is a three-level 
one and has two weight factors 
$\theta $ and $\sigma $. 

Numerical implementation of scheme (\ref{17}) is associated with 
sequential solving grid problems 
\[
  \left ( \theta B_\alpha^n + \sigma \tau A \right ) y_\alpha^{n+1} = 
  \chi_\alpha^n,
  \quad \alpha =1,2,...,p 
\]
with transition  from time level  $t^n$ to new time level $t^{n+1}$. 
For vector additive scheme  (\ref{17}) it is possible to implement
a parallel organization of computations --- an independent 
calculation of the individual components.  

Using notation (\ref{10}), we write operator-difference 
scheme (\ref{17}) in the vector form  
\begin{equation}\label{18}
  \theta \mathbf{G} 
  \frac{\mathbf{y}^{n+1} - 2\mathbf{y}^{n} + \mathbf{y}^{n-1}}{\tau } + 
\end{equation}
\[
  \mathbf{C} \frac{\mathbf{y}^{n} - \mathbf{y}^{n-1}}{\tau } 
  + \mathbf{D} (\sigma \mathbf{y}^{n+1} + (1-2\sigma)\mathbf{y}^{n} +
  \sigma \mathbf{y}^{n-1} ) = \mathbf{g}^{n}, 
\] 
where  
\[
  \mathbf{G} = \{ G_{\alpha \beta } \},
  \quad G_{\alpha \beta } = B_\alpha A^{-1} B_\alpha \delta_{\alpha \beta },
\]
\[
  \mathbf{g}^{n} = \{ g_\alpha^{n} \},
  \quad g_\alpha^{n} = B_\alpha A^{-1} \varphi^n,
  \quad \alpha, \beta  =1,2,...,p .
\]
Thus, in (\ref{18})  operator $\mathbf{G} = \mathbf{G}^* > 0$.

Taking into account that  
\[
  \frac{\mathbf{y}^{n} - \mathbf{y}^{n-1}}{\tau } =
  \frac{\mathbf{y}^{n+1} - \mathbf{y}^{n-1}}{2 \tau }
  - \frac{\mathbf{y}^{n+1} - 2\mathbf{y}^{n+1} + \mathbf{y}^{n-1}}{2 \tau },
\]
\[
  \sigma \mathbf{y}^{n+1} + (1-2\sigma)\mathbf{y}^{n} +   \sigma \mathbf{y}^{n-1}
  = 
\]
\[
\left ( \sigma - \frac{1}{4} \right ) 
  (\mathbf{y}^{n+1} - 2\mathbf{y}^{n+1} + \mathbf{y}^{n-1} )
  + \frac{1}{4}(\mathbf{y}^{n+1} + 2\mathbf{y}^{n+1} + \mathbf{y}^{n-1} ),
\]
rewrite (\ref{18}) in the form  
\begin{equation}\label{19}
  \mathbf{C} \frac{\mathbf{y}^{n+1} - \mathbf{y}^{n-1}}{2 \tau } 
  + \mathbf{R}
  \frac{\mathbf{y}^{n+1} - 2\mathbf{y}^{n+1} + \mathbf{y}^{n-1}}{\tau } +
\end{equation}
\[
  \frac{1 }{4} \mathbf{D}
  ( \mathbf{y}^{n+1} + 2\mathbf{y}^{n+1} + \mathbf{y}^{n-1})  =
 \mathbf{g}^{n} ,
\]
where  
\[
  \mathbf{R} = 
  \theta \mathbf{G} - \frac{1}{2} \mathbf{C} + 
  \tau \left ( \sigma - \frac{1}{4} \right ) \mathbf{D} .
\]
Let  
\[
  \mathbf{v}^{n} = \frac{1}{2} (\mathbf{y}^{n} + \mathbf{y}^{n-1}),
  \quad \mathbf{w}^{n} = \mathbf{y}^{n} - \mathbf{y}^{n-1}
\]
and rewrite (\ref{19}) in the form 
\begin{equation}\label{20}
  \mathbf{C} \frac{\mathbf{w}^{n+1} + \mathbf{w}^{n}}{2 \tau } 
  + \mathbf{R}
  \frac{\mathbf{w}^{n+1} - \mathbf{w}^{n}}{\tau } +
  \frac{1 }{2} \mathbf{D}
  ( \mathbf{v}^{n+1} + \mathbf{y}^{n})  =
 \mathbf{g}^{n} .
\end{equation}

Multiplying scalarly both sides of (\ref{20}) by  
\[
  2 (\mathbf{v}^{n+1} - \mathbf{v}^{n}) =
  \mathbf{w}^{n+1} + \mathbf{w}^{n} ,
\]
we get the equality
\begin{equation}\label{21}
  \frac{1 }{2 \tau} 
  ( \mathbf{C} (\mathbf{w}^{n+1} + \mathbf{w}^{n}),
    \mathbf{w}^{n+1} + \mathbf{w}^{n}) +
  \frac{1 }{\tau} 
  ( \mathbf{R} (\mathbf{w}^{n+1} - \mathbf{w}^{n}),
    \mathbf{w}^{n+1} + \mathbf{w}^{n}) +
\end{equation}
\[
  ( \mathbf{D} (\mathbf{v}^{n+1} + \mathbf{v}^{n}),
    \mathbf{v}^{n+1} - \mathbf{v}^{n}) =
	(\mathbf{g}^{n}, \mathbf{w}^{n+1} + \mathbf{w}^{n} ) .
\]

Similarly  (\ref{13}),  we have  
\[
  (\mathbf{g}^{n}, \mathbf{w}^{n+1} + \mathbf{w}^{n} ) \leq 
  \frac{1 }{2 \tau} 
  ( \mathbf{C} (\mathbf{w}^{n+1} + \mathbf{w}^{n}) +
  \frac{\tau}{2} 
  (A^{-1} \varphi^n, \varphi^n) .
\]
With this in mind, from (\ref{21}) it follows
\begin{equation}\label{22}
  \mathcal{E}_{n+1} \leq 
  \mathcal{E}_{n} +
  \frac{\tau}{2} 
  (A^{-1} \varphi^n, \varphi^n) ,
\end{equation}
where  
\[
  \mathcal{E}_{n} = 
  ( \mathbf{D} \mathbf{v}^{n}, \mathbf{v}^{n})
  +   \frac{1 }{\tau}
  ( \mathbf{R} \mathbf{w}^{n}, \mathbf{w}^{n}) .
\]

We formulate the conditions under which the value 
of $\mathcal{E}_{n}$ determines the square of 
the norm of the difference solution.  
By virtue of the positivity of operator $\mathbf{D}$ 
it is sufficient to require non-negativity of 
operator $\mathbf{R}$.

For the energy of operators  $\mathbf{C}$ and  
$\mathbf{G}$  holds the following coordinate-wise representation  
\[
  (\mathbf{C} \mathbf{u},\mathbf{u} ) =
  \left (A^{-1} \sum_{\alpha =1}^{p}   B_\alpha u_{\alpha},
  \sum_{\alpha =1}^{p}   B_\alpha u_{\alpha} \right ) ,
\]
\[
  (\mathbf{G} \mathbf{u},\mathbf{u} ) =
\]
Considering
\[
  \left (A^{-1} \sum_{\alpha =1}^{p}   B_\alpha u_{\alpha},
  \sum_{\alpha =1}^{p}   B_\alpha u_{\alpha} \right ) =
  \left ( \sum_{\alpha =1}^{p} 
  \left (A^{-1/2} B_\alpha u_{\alpha} \right )^2,  1 \right ) \leq 
\]
\[
  p \sum_{\alpha =1}^{p} \left ( (A^{-1/2} B_\alpha u_{\alpha})^2, 1 \right ) =
  p \sum_{\alpha =1}^{p} \left (A^{-1} B_\alpha u_{\alpha},
  B_\alpha u_{\alpha} \right )  , 
\]
we get
\[
  \mathbf{C} \leq p \mathbf{G} .
\]
Therefore, at $\sigma \geq 1/4$ and $\theta \geq p/2$  
holds $\mathbf{R} \geq 0$.
We have thus proved the following assertion.  

\begin{theorem}\label{t-2}
If $\sigma \geq 1/4$ and $\theta \geq p/2$, than 
operator $\mathbf{R} \geq 0$ in $\mathbf{H}$, an additive 
vector scheme (\ref{17}) is absolutely stable and for 
the difference solution holds a priori estimate (\ref{22}) with  
\[
  \mathcal{E}_{n} = 
  \left \| \frac{\mathbf{y}^{n} + \mathbf{y}^{n-1}}{2} 
  \right \|_{\mathbf{D}} +
  +   \frac{1 }{\tau}
  \left ( \mathbf{R} (\mathbf{y}^{n} - \mathbf{y}^{n-1}), 
  \mathbf{y}^{n} - \mathbf{y}^{n-1} \right ) .
\]
\end{theorem}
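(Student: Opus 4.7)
The plan is to follow essentially the energy-method derivation already laid out in the excerpt and to justify the two nontrivial pieces that have not yet been verified: the vector form of the scheme and the operator inequality $\mathbf{C}\le p\mathbf{G}$. First I would check that the componentwise scheme (\ref{17}) really can be rewritten as (\ref{18}) by applying $B_\alpha A^{-1}$ to each equation and assembling the results, so that the mixed sum $\sum_{\beta\ne\alpha} B_\beta\cdots$ combines with the $B_\alpha$-term to reproduce $\mathbf{C}(\mathbf{y}^n-\mathbf{y}^{n-1})/\tau$; the diagonal piece becomes $\theta\mathbf{G}$, and the $A$-term becomes $\mathbf{D}$. This reduction is the point where all the splitting structure is encoded into self-adjoint block operators on $\mathbf{H}$.

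Next I would carry out the algebraic rewriting of (\ref{18}) into (\ref{19}), using the two identities displayed in the excerpt to shift from a ``forward'' three-level form to the symmetric form with a central difference $(\mathbf{y}^{n+1}-\mathbf{y}^{n-1})/(2\tau)$, a second-difference term weighted by $\mathbf{R}=\theta\mathbf{G}-\tfrac12\mathbf{C}+\tau(\sigma-\tfrac14)\mathbf{D}$, and a symmetric average weighted by $\tfrac14\mathbf{D}$. Introducing $\mathbf{v}^n$ and $\mathbf{w}^n$ puts the equation in the form (\ref{20}); since $2(\mathbf{v}^{n+1}-\mathbf{v}^n)=\mathbf{w}^{n+1}+\mathbf{w}^n$, taking the scalar product with this test element turns each of the three operator terms into a difference of two quadratic forms at levels $n+1$ and $n$, yielding (\ref{21}). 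The right-hand side is controlled exactly as in (\ref{13}): use $\tilde{\mathbf g}^n=\{B_\alpha A^{-1}\varphi^n\}$ to rewrite $(\mathbf{g}^n,\mathbf{w}^{n+1}+\mathbf{w}^n)$ as $(A^{-1}\varphi^n,\sum_\beta B_\beta(w^{n+1}_\beta+w^n_\beta))$ and then apply the elementary inequality $2ab\le a^2+b^2$ in the $A^{-1}$ inner product, which absorbs the $\mathbf{C}$-term on the left and leaves $\tfrac{\tau}{2}(A^{-1}\varphi^n,\varphi^n)$ on the right. This gives (\ref{22}) with $\mathcal{E}_n=(\mathbf{D}\mathbf{v}^n,\mathbf{v}^n)+\tfrac{1}{\tau}(\mathbf{R}\mathbf{w}^n,\mathbf{w}^n)$, matching the formula stated in Theorem~\ref{t-2}.

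The step I expect to be the genuine crux is verifying that $\mathbf{R}\ge 0$ under the stated parameter restrictions, since this is what turns $\mathcal{E}_n$ into a bona fide squared norm of the difference solution. Because $\tau(\sigma-\tfrac14)\mathbf{D}\ge 0$ whenever $\sigma\ge 1/4$, it suffices to show $\theta\mathbf{G}\ge\tfrac12\mathbf{C}$, for which the Cauchy--Schwarz-type bound $\mathbf{C}\le p\mathbf{G}$ displayed just before the theorem is decisive. I would justify that bound by writing
\[
  (\mathbf{C}\mathbf{u},\mathbf{u})=\Bigl\|\sum_{\alpha=1}^{p}A^{-1/2}B_\alpha u_\alpha\Bigr\|^2\le p\sum_{\alpha=1}^{p}\|A^{-1/2}B_\alpha u_\alpha\|^2 = p(\mathbf{G}\mathbf{u},\mathbf{u}),
\]
where the middle inequality is Cauchy--Schwarz applied to the sum of $p$ vectors in $H$. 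Once this is in hand, $\theta\ge p/2$ gives $\theta\mathbf{G}\ge\tfrac{p}{2}\mathbf{G}\ge\tfrac12\mathbf{C}$ and hence $\mathbf{R}\ge 0$.

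Finally, with $\mathbf{R}\ge 0$ and $\mathbf{D}=\mathbf{D}^*>0$, the quantity $\mathcal{E}_n$ is nonnegative and dominates $\|\mathbf{v}^n\|^2_{\mathbf{D}}$, so the level-wise inequality (\ref{22}) expresses absolute stability with respect to both initial data and right-hand side, exactly paralleling estimate (\ref{14}) for the vector differential problem. I would close by noting that the absence of any restriction on $\tau$ is what makes the scheme unconditionally stable, and that the parallelizability observed after (\ref{17}) is preserved because the block-diagonal structure of $\theta B_\alpha+\sigma\tau A$ is untouched by the energy argument.
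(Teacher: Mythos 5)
Your proposal is correct and follows essentially the same route as the paper: reduction of (\ref{17}) to the vector form (\ref{18}), the algebraic passage to (\ref{20}), the energy identity (\ref{21}) with the right-hand side absorbed via the $A^{-1}$-inner-product inequality, and the Cauchy--Schwarz bound $\mathbf{C}\le p\mathbf{G}$ to conclude $\mathbf{R}\ge 0$ for $\sigma\ge 1/4$, $\theta\ge p/2$. The only cosmetic difference is that you phrase the key bound as $\bigl\|\sum_\alpha A^{-1/2}B_\alpha u_\alpha\bigr\|^2\le p\sum_\alpha\|A^{-1/2}B_\alpha u_\alpha\|^2$, which is exactly the inequality the paper writes out componentwise.
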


Proved a priori estimate (\ ref (22)) guarantees  
the stability of the difference solution in the half-integer 
time levels (for $\mathbf{v}^{n}$)  and is the difference 
analogue for estimate  (\ref{14}).

\section{Generalizations}

We note some of the key research areas that focus 
on the synthesis and development of the obtained results. 

On the basis of a priori estimate  (\ref{22})  we obtain 
the convergence of the solution of difference problem (\ref{17}) 
to the solution of differential problem  (\ref{1}), (\ref{2}) 
with the first order of $\tau$. 
In the standard way \cite{Samarskii} we consider the problem 
for the truncation error using a particular scheme for finding 
the solution 
at the first time level.  

Instead of (\ref{17}) we can use another additive schemes.  
In the class of vector additive schemes, in particular, 
special attention should be given to the scheme   
\[
  \sum_{\beta= 1}^{\alpha} 
  B_\beta \frac{y_\beta^{n+1} - y_\beta^{n}}{\tau } +
  \sum_{\beta= \alpha + 1}^{p} 
  B_\beta \frac{y_\beta^{n} - y_\beta^{n-1}}{\tau } +
\]
\[
  A (\sigma y_\alpha^{n+1} + (1-2\sigma)y_\alpha^{n} + \sigma y_\alpha^{n-1}) 
  = \varphi^n,
\]
\[
  \quad n = 0, 1, ... , 
  \quad \alpha =1,2,...,p .
\]
In this case,  the time derivative of the several 
components of the vector solution is referred to 
the upper time-level.  
Such vector additive schemes are widely 
used \cite{Abrashin,SamVabMatVect} at usual 
decomposition  (\ref{4}).

Some resources are available when considering more 
general than (\ref{1}), (\ref{2}), (\ref{5}) problems. 
In our study we restricted ourselves to the simplest 
problems, where operators $A, B$ and the components 
of splitting of $B_\alpha, \alpha 1,2,..., p$ are constant 
self-adjoint and positive in finite Hilbert space $H$. 
These restrictions can be removed in some cases, by 
analogy with the theory of additive schemes for problems 
(\ref{1}), (\ref{2}) 
with the usual splitting of (\ref{5}), considering, for 
example, problems with not self-adjoint  operators,  
problem with operator factors \cite{SamMatVab,SamVabAdditive}. 

In terms of generalizing the results, the greatest 
interest is to construct the additive operator-difference 
schemes for solving the Cauchy 
problem for evolutionary equation (\ref{1})  
in the splitting both operator $A$ and operator $B$  --- 
for the problem  (\ref{1}), (\ref{2}), (\ref{4}), (\ref{5}).
In this case the transition to the new time level is based 
on solving a sequence of problems for equations  
\[
   B_{\alpha} \frac {d u_{\alpha}} {d t}
   + A_\alpha  u_{\alpha} = f_{\alpha}(t),
   \quad t > 0
   \quad \alpha =1,2,...,p 
\]
with appropriate initial conditions.

\begin{bibdiv}
\begin{biblist}

\bib{Abrashin}{article}{
      author={Abrashin, V.N.},
       title={A variant of the method of variable directions 
	   for the solution of multi- dimensional problems 
	   of mathematical-physics},
        date={1990},
     journal={Differ. Equations},
      volume={26},
      number={2},
       pages={243\ndash 250},
}

\bib{Douglas}{article}{
      author={Douglas, J.},
      author={Rachford, H.H.},
       title={On the numerical solution of heat conduction 
	   problems in two and three space variables},
        date={1956},
     journal={Trans. Am. Math. Soc.},
      volume={82},
       pages={421\ndash 439},
}

\bib{GordMel}{article}{
      author={Gordeziani, D.G.},
      author={Meladze, G.V.},
       title={Simulation of the third boundary value problem for
  multidimensional parabolic equations in an arbitrary domain by
  one-dimensional equations},
        date={1975},
     journal={Computational Mathematics and Mathematical Physics},
      volume={14(1974)},
      number={1},
       pages={249\ndash 253},
}

\bib{Mathew}{book}{
      author={Mathew, T.},
       title={Domain decomposition methods for the numerical solution of
  partial differential equations},
   publisher={Lecture Notes in Computational Science and Engineering 61.
  Berlin: Springer. xiii, 764~p.},
        date={2008},
}

\bib{Marchuk}{incollection}{
      author={Marchuk, G.I.},
       title={Splitting and alternating direction methods},
        date={1990},
   booktitle={Handbook of numerical analysis, vol. i},
      editor={Ciarlet, P.G.},
      editor={Lions, Jacques-Louis},
   publisher={North-Holland},
       pages={197\ndash 462},
}

\bib{Peaceman}{article}{
      author={Peaceman, D.W.},
      author={Rachford, H.H.},
       title={The numerical solution of parabolic and elliptic differential
  equations},
        date={1955},
     journal={J. Soc. Ind. Appl. Math.},
      volume={3},
       pages={28\ndash 41},
}

\bib{SamReg}{article}{
      author={Samarskii, A.A.},
       title={Regularization of difference schemes},
        date={1967},
     journal={Computational Mathematics and Mathematical Physics},
      volume={7},
      number={1},
       pages={79\ndash 120},
}

\bib{Samarskii}{book}{
      author={Samarskii, A.A.},
       title={The theory of difference schemes},
   publisher={Pure and Applied Mathematics, Marcel Dekker. 240. New York, NY:
  Marcel Dekker. 786 p.},
        date={2001},
}

\bib{SamGul}{book}{
      author={Samarskii, A.A.},
      author={Gulin, A.V.},
       title={Stability of difference schemes},
    language={Russian},
   publisher={Moscow: URSS. 384 p.},
        date={2005},
}

\bib{SamMatVab}{book}{
      author={Samarskii, A.A.},
      author={Matus, P.P.},
      author={Vabishchevich, P.N.},
       title={Difference schemes with operator factors},
   publisher={Mathematics and its Applications (Dordrecht). 546. Dordrecht:
  Kluwer Academic Publishers. x, 384 p.},
        date={2002},
}

\bib{SamVabMatVect}{article}{
      author={Samarskii, A.A.},
      author={Matus, P.P.},
      author={Vabishchevich, P.N.},
       title={Stability of Vector Additive Schemes},
        date={1998},
     journal={Doklady mathematics},
      volume={58},
      number={1},
       pages={133\ndash 135},
}

\bib{SamVabAdditive}{book}{
      author={Samarskii, A.A.},
      author={Vabishchevich, P.N.},
       title={Additive schemes for problems of mathematical physics },
    language={Russian},
   publisher={Moscow: Nauka. 320 p.},
        date={1999},
}

\bib{SamVabReg}{article}{
      author={Samarskii, A.A.},
      author={Vabishchevich, P.N.},
       title={Regularized additive full approximation schemes},
        date={1998},
     journal={Doklady mathematics},
      volume={57},
       pages={83\ndash 86},
}

\bib{VabVectAdd}{article}{
      author={Vabishchevich, P.N.},
       title={Vector additive difference schemes for first-order evolution
  equations},
        date={1996},
     journal={Computational Mathematics and Mathematical Physics},
      volume={36},
      number={3},
       pages={317\ndash 322},
}

\bib{VabReg}{article}{
      author={Vabishchevich, P.N.},
       title={Regularized additive operator-diffrerence schemes},
        date={2010},
     journal={Computational Mathematics and Mathematical Physics},
      volume={50},
      number={3},
       pages={428\ndash 436},
}

\bib{VabDDM}{article}{
      author={Vabishchevich, P.N.},
       title={Domain decomposition methods with overlapping subdomains for the
  time-dependent problems of mathematical physics},
        date={2008},
     journal={Computational Mathematics and Mathematical Physics},
      volume={8},
       pages={393\ndash 405},
}

\bib{Yanenko}{book}{
      author={Yanenko, N.N.},
       title={The method of fractional steps. The solution of problems of
  mathematical physics in several variables},
   publisher={Berlin-Heidelberg-New York: Springer Verlag, VIII, 160 p.},
        date={1971},
}

\end{biblist}
\end{bibdiv}

\end{document}